\documentclass{amsart}

\usepackage{amssymb}
\usepackage{mathtools}
\mathtoolsset{showonlyrefs}
\usepackage[stretch=10,shrink=10]{microtype}
\usepackage[shortlabels]{enumitem}
\usepackage{tikz}

\usepackage{pgfplots}
\pgfplotsset{compat=1.18}
\usetikzlibrary{arrows.meta}
\usepackage{comment}
\usepackage{nccmath}
\usepackage[normalem]{ulem}

\newcommand{\q}{\mathfrak{q}}

 \newcommand{\f}{\frac}

\renewcommand{\phi}{\varphi}

\newcommand{\E}{\mathbf E}
\renewcommand{\P}{\mathbf P}

\DeclareMathOperator{\Geo}{Geo}
\DeclareMathOperator{\Ber}{Ber}
\DeclareMathOperator{\Bin}{Bin}

\DeclareMathOperator{\RFM}{RFM}
\DeclareMathOperator{\FM}{FM}

\usepackage{hyperref}
\hypersetup{colorlinks,linkcolor={blue},citecolor={olive},urlcolor={brown}}
\usepackage[all]{hypcap}
\usepackage{theoremref}

\newtheorem{theorem}{Theorem}
\newtheorem{lemma}[theorem]{Lemma}

\newtheorem*{lemma*}{Lemma}

\newtheorem{proposition}[theorem]{Proposition}
\newtheorem{corollary}[theorem]{Corollary}

\theoremstyle{definition}

\newtheorem*{remark*}{Remark}

\title{The frog model with death revisited}

\author{Samyah Ahmed} \email{samyah.ahmed@baruchmail.cuny.edu}
\author{Matthew Junge} \email{Matthew.Junge@baruch.cuny.edu}

\begin{document}
\maketitle

\begin{abstract}
We prove that the probability the frog model with death and drift on the $d$-ary tree is recurrent can be made positive and thus is not monotone in the drift parameter.
\end{abstract}

\section{Introduction}
For the \emph{frog model with death} place an active frog at the root of the infinite rooted $d$-ary tree $\mathbb T_d$ and one sleeping frog at the non-root vertices. Active frogs perform simple random walk, but independently die before taking each step with probability $1-q$. Whenever an active frog visits a sleeping frog, the sleeping frog becomes active. We will refer to this process as $\FM(d, q)$. These random activation dynamics have been interpreted as percolation \cite{popov2003frogs}, infection \cite{alves2002shape} and combustion \cite{ramirez2004asymptotic} models.

In \cite{alves2002phase} and subsequent refinements \cite{lebensztayn2005improved, gallo2018frog, lebensztayn2019new, gallo2023critical}, bounds were proven for the phase-transition at which the process survives with positive probability.
A realization of the frog model is \emph{recurrent} if the root is visited by infinitely many active frogs and \emph{transient} if not. Recurrence is a stronger condition than survival and difficult to study. For example, proving that the frog model with $q=1$ on $\mathbb T_2$ is recurrent with positive probability went unsolved for over a decade, and the question is still open for $\mathbb T_3$ and $\mathbb T_4$ \cite{popov2003frogs, hoffman2017recurrence}. 

It is easy to prove that $\FM(d,q)$ is transient almost surely for any $d \geq 2$ and $q <1$ (see \thref{prop:p}). %
Seeking a more recurrent process, \cite{beckman2019frog} introduced a variant of the frog model on $\mathbb T_d$ in which active frogs have a drift $p$ towards to the root. They only considered the case $q=1$ with no death and showed that the process can be made recurrent  almost surely for any $d\geq 2$ so long as $p> 0.4155$. Followup efforts have focused on improving their bound \cite{guo2022minimal, bailey2023critical, mathews2024improved}. In this work we \emph{revisit} the frog model with death by combining  death and drift and asking if this more general model can be made recurrent with positive probability when $q<1$. 

\subsection{Model and results}
Let $\FM(d,q,p)$ be the \emph{frog model with death $1-q$ and drift $p$} on $\mathbb T_d$. The initial conditions, death dynamics, and waking dynamics are as with $\FM(d,q)$. The difference is that active frogs at non-root sites that survive to jump will move towards the root with probability $p$ and away from the root to a uniformly chosen child vertex with probability $1-p$. Active frogs at the root that survive stay in place with probability $p$ or jump away from the root to a uniformly chosen child with probability $1-p$. Note that $\FM(d,q,(d+1)^{-1}) = \FM(d,q)$.

The recurrence and transience behavior with death and drift present is subtle because of competing forces. Increasing the drift makes active frogs more likely to return to the root and to explore distant parts of the tree, both of which ought to promote recurrence. However, since sleeping frogs are only awakened during steps away from the root, excessive drift can cause active frogs to spend most of their lifespan moving towards the root not waking others.

Proving continuity and monotonicity results in the parameters for the frog model is believed to be difficult \cite{fontes2004critical, bailey2023critical, mathews2024improved}. Additionally, since the frog model with death and simple random walks is transient for any $q<1$, it is unclear if there is any recurrent phase at all.
Our main result answers this question in the affirmative. %

\begin{theorem}\thlabel{thm:main}
    $\FM(d,q,p) \text{ is recurrent}$ with positive probability for any $d \geq 2$ for all $(p,q) \in[0.4950, 0.4975] \times [0.9999998,1]$. %
\end{theorem}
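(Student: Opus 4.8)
The plan is to show that the number of frogs that return to the root stochastically dominates the total progeny of a supercritical Galton--Watson process; since such a process survives with positive probability, this yields infinitely many returns, which is exactly recurrence with positive probability. The engine is the self-similarity of $\TT_d$: whenever a frog reaches the root and then steps to one of its $d$ children it enters a subtree that is an unexplored copy of the whole space, and the frogs it wakes there can send descendants back to the root. I would therefore set up the comparison so that an ``individual'' is a frog that visits the root, and its ``offspring'' are the frogs woken in the fresh subtree it enters that themselves later reach the root. Writing $m(d,p,q)$ for the mean number of such offspring, recurrence with positive probability reduces to verifying the supercriticality bound $m(d,p,q)>1$ on the given rectangle.

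The main obstacle is that the frog trajectories in the true model are highly dependent: frogs backtrack, revisit vertices, reuse subtrees, and share a single sleeping population, so the embedded ``offspring'' counts are not genuinely independent and the comparison is not literally a branching process. I would extract the needed independence by passing to a dominated non-backtracking variant $\NBFM(d,q,p)$, whose recurrence implies that of $\FM(d,q,p)$, and in which the frogs entering distinct first-visited subtrees evolve independently; equivalently, one argues directly via the first-visit decomposition that the subtrees hanging off vertices visited for the first time are i.i.d. Restricting attention to returning frogs that enter \emph{fresh} subtrees only lowers the count, so this produces a bona fide Galton--Watson lower bound. Making this domination and independence rigorous is the technically delicate heart of the argument.

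To compute (or lower bound) $m$, the key ingredient is the probability $\rho=\rho(p,q)$ that a frog started one level below the root reaches the root before dying. First-step analysis on the distance-from-root process --- a killed birth--death chain that, from any level $k\ge1$, moves down with probability $qp$, up with probability $q(1-p)$, and is killed with probability $1-q$ --- gives $\rho = qp + q(1-p)\rho^2$, hence $\rho = \frac{1-\sqrt{1-4q^2 p(1-p)}}{2q(1-p)}$, and by the strong Markov property a frog activated at distance $j$ reaches the root with probability $\rho^{j}$. The mean offspring $m$ is then the expected number of frogs woken during one root-excursion into a fresh subtree, each weighted by $\rho^{j}$ for its activation distance $j$. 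This expectation is a range/Green's-function computation for the killed biased walk, in which each downward step into a not-yet-visited vertex contributes exactly one newly activated frog; crucially, a larger degree $d$ makes a re-descending frog more likely to land on an unexplored child, so $m(d,p,q)$ is nondecreasing in $d$.

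Finally I would verify $m(d,p,q)>1$ on $[0.4950,0.4975]\times[0.9999998,1]$. Both monotonicities help: $m$ is nondecreasing in $d$ (so it suffices to treat $d=2$) and increasing in $q$ (less death means longer lives and more activations). At $q=1$ each surviving frog lives forever and the drifting-away walk with $p<\tfrac12$ visits infinitely many fresh vertices, so $m=\infty$; since $p\in[0.4950,0.4975]$ comfortably exceeds the no-death recurrence threshold $0.4155$ of \cite{beckman2019frog}, this regime is safely supercritical. Letting $q\uparrow 1$ one has $m\to\infty$, so there is a threshold $q^\ast(d,p)<1$ above which $m>1$, and the content of the theorem is that $0.9999998$ exceeds $q^\ast(2,p)$ uniformly over the stated $p$-interval. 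The interval in $p$ is kept small precisely because the two competing effects flagged in the introduction --- larger $p$ raising $\rho$ but lowering the number of frogs woken --- make $m$ non-monotone in $p$, so I would not rely on monotonicity there but instead verify the bound across the interval from the explicit formula, with interval-arithmetic estimates controlling the small death correction. Supercriticality of the comparison process then gives a positive probability of infinitely many returns to the root, completing the proof.
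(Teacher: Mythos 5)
There is a genuine gap at the heart of your construction: the Galton--Watson comparison you anchor at the root cannot be supercritical, because the root of $\TT_d$ has only $d$ children and hence only $d$ fresh subtrees to hand out. Your ``individuals'' are frogs that visit the root, and an individual only reproduces if it then enters an \emph{unexplored} child subtree. After at most $d$ individuals have reproduced, every child of the root has been entered, so every later individual in your tree of descendants has zero offspring under the very restriction (``returning frogs that enter fresh subtrees only'') that you invoke to get independence. The resulting lower-bound process is not a supercritical branching process; it is a process whose total progeny is a.s.\ finite, so it proves nothing about recurrence. This is exactly why the known recurrence proofs on trees do not branch over root visits: they set up self-similarity at \emph{every level} of the tree (frogs arriving at a vertex $v$ for the first time spawn independent subprocesses in the $d$ fresh subtrees below $v$, and one tracks how many frogs each subtree sends back up to the parent), and then extract recurrence from the resulting level-indexed recursion. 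The paper does this via the recursive frog model: it derives a distributional recursion for the number of root visits $V_t$ in the model truncated at depth $t$, computes first and second moments (\thref{lem:EV}, \thref{lem:EV2/EV}), and applies Paley--Zygmund to get $\P(V_t \to \infty) > 0$. A mean-offspring inequality alone, even correctly set up, does not suffice here --- one needs second-moment control (or, when $q=1$, the Poisson bootstrapping of the self-similar frog model, which the paper explains breaks down in the presence of death).

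Two smaller points. First, your claim that $m=\infty$ at $q=1$ is wrong: a frog woken at distance $j$ from the root returns with probability $\rho^j=(p/(1-p))^j<1$, and roughly one frog is woken per level along an outward ray, so the weighted count is a convergent geometric series (large, but finite, for $p$ near $1/2$). Second, the ``technically delicate heart'' you defer --- making the domination by a non-backtracking variant rigorous when $q<1$ --- is genuinely nontrivial: trimming a path is not a monotone operation once death is present, because omitted steps are steps on which the frog could have died. The paper handles this with an explicit stochastic bound on excursion lengths (\thref{lem:biased-walk-bounds}) and a compensating survival parameter $\q$; any completed version of your argument would need an analogous device.
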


We did not try to optimize the size of the rectangle in \thref{thm:main}, but believe that it cannot be made dramatically larger in the $q$-coordinate using our methods. 
Next, we give sufficient conditions for transience.

\begin{proposition}\thlabel{prop:p}
    Given $d\geq 2$ and $q<1$, we have $\FM(d,q,p)$ is transient almost surely if either (a) $q \leq 1/2$ or $p \geq 2-q^{-1}$, or (b) $p < 1/ (1+qd)$. 
\end{proposition}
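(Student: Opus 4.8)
The plan is to prove almost-sure transience by a first-moment argument: under hypothesis (b) I will show that the expected number of visits to the root is finite, and under (a) that only finitely many frogs are ever activated; either one forces the root to be visited finitely often almost surely. Build the model from the standard i.i.d.\ construction, attaching to each non-root $v$ an independent killed drifted walk $\mathcal W_v$ that the frog at $v$ follows once woken (each step it dies with probability $1-q$, else moves toward the root with probability $p$ and to a uniform child with probability $1-p$). Only the distance to the root matters for returning to it, so the relevant one-dimensional object is the killed walk on $\{0,1,2,\dots\}$ that from level $j\ge 1$ dies with probability $1-q$, moves to $j-1$ with probability $qp$, and to $j+1$ with probability $q(1-p)$. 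Let $r$ be the probability this walk ever reaches $0$ from level $1$; a first-step decomposition gives $r = qp + q(1-p)r^2$, so $r$ is the smaller root of $q(1-p)r^2 - r + qp = 0$, and by the strong Markov property the chance of reaching the root from depth $k$ is $r^k$. Writing $\gamma$ for the expected number of visits to the root by one killed walk started at the root, the a.s.\ finite lifetime (mean $q/(1-q)$) gives $\gamma \le 1/(1-q) < \infty$.

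Since the event that $v$ is ever activated depends only on the trajectories $\mathcal W_u$ with $u \neq v$, it is independent of $\mathcal W_v$, so the expected number of root visits contributed by $v$'s frog, \emph{conditioned on $v$ being woken}, equals the unconditional value $r^{k}\gamma$ for $v$ at depth $k$. Setting $V_k := \E[\#\{v \text{ woken} : v \text{ at depth } k\}]$ and summing over $v$ yields the master identity
\[ \E[\#\text{root visits}] \;=\; \gamma \sum_{k \ge 0} V_k\, r^k. \]
For part (b) I bound $V_k \le d^k$, the number of depth-$k$ vertices, giving $\E[\#\text{root visits}] \le \gamma \sum_{k\ge 0} (dr)^k < \infty$ whenever $dr < 1$. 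It then remains to check that the stated region lies inside $\{dr<1\}$: because $r$ is increasing in $p$, it suffices to verify $r < 1/d$ at $p = (1+qd)^{-1}$, and substituting $r = 1/d$ into the quadratic shows this reduces to $q(d^2-1) \le (d-q)(1+qd)$, i.e.\ to $d(1-q^2) \ge 0$, which always holds. Hence (b) forces $\E[\#\text{root visits}] < \infty$, so the root is visited finitely often a.s.\ and the model is transient.

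For part (a) I instead couple the frog model to a dominating Galton--Watson process in which a new frog is spawned at \emph{every} away-step of every frog. This dominates the true activation, since each activation is a first arrival at a vertex and hence comes from its parent via an away-step. A single frog's offspring count is $\Bin(L,1-p)$ with $L \sim \Geo$ the number of surviving steps ($\E L = q/(1-q)$), so the mean offspring number is $m = q(1-p)/(1-q)$. One checks $m \le 1 \iff q(2-p) \le 1$, which is exactly condition (a): if $q \le 1/2$ then $q(2-p) \le 1$ automatically, while $p \ge 2 - q^{-1} \iff q(2-p) \le 1$. The offspring law is non-degenerate (zero offspring with probability at least $1-q$), so a (sub)critical Galton--Watson process goes extinct almost surely; thus only finitely many frogs are ever activated, each contributing finitely many root visits, and the model is transient a.s.

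The main obstacle is the master identity, and in particular the independence step that replaces the \emph{conditional} expected root-visits of a woken frog by the unconditional $r^k \gamma$; this rests on the i.i.d.-trajectory construction and on the clean computations of $r$ and $\gamma$. For part (a) the delicate point is the coupling to the dominating branching process together with the critical case $m = 1$, where the expected total progeny is infinite yet extinction still holds almost surely, so one must invoke a.s.\ extinction of non-degenerate critical Galton--Watson processes rather than a first-moment bound. The threshold reconciliation in (b)---that the stated rectangle sits inside $\{dr < 1\}$---is then a short monotonicity-plus-algebra verification.
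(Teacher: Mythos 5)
Your proof is correct and follows essentially the same route as the paper: part (a) by domination with a (sub)critical branching process whose extinction criterion reduces to $q(2-p)\le 1$, and part (b) by a first-moment bound on root visits obtained by summing the killed biased walk's return probability over the $d^k$ vertices at depth $k$. The only differences are cosmetic: the paper's dominating branching process in (a) is indexed by time steps (die / continue / split, mean offspring $qp+2q(1-p)$) rather than by the activation genealogy, and in (b) the paper uses the simpler upper bound $(q\rho)^k$ with $\rho=p/(1-p)$ in place of your exact quadratic root $r$, arriving at the same thresholds.
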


An unsolved conjecture is proving that the probability $\FM(d,1,p)$ is recurrent is monotone in $p$ \cite{beckman2019frog, bailey2023critical, mathews2024improved}. Our results tells us that $\FM(d,q,p)$ is not necessarily monotone when $q<1$.

\begin{corollary} \thlabel{cor:mono}
    $\P(\FM(d,q,p) \text{ is recurrent})$ is not monotone in $p$ for any fixed  $d \geq 2$ and  $q \in [ 0.9999998,1)$. 
\end{corollary}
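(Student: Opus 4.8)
The plan is to derive the non-monotonicity from the two results already available: the positivity of the recurrence probability inside the rectangle (\thref{thm:main}) and the transience criteria (\thref{prop:p}). The key observation is that monotonicity in $p$ would require the recurrence probability to be nondecreasing (or nonincreasing) over the entire range $p \in [0,1]$ for fixed $d$ and $q$. I will exhibit three values of $p$ at which the recurrence probability is forced to take a small value, then a positive value, then a small value again, which rules out any monotone behavior.

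Concretely, fix $d \geq 2$ and $q \in [0.9999998, 1)$. First I would locate a value $p_1$ on the low end where \thref{prop:p}(b) applies: for $p_1 < 1/(1+qd)$ the process is transient almost surely, so $\P(\FM(d,q,p_1) \text{ is recurrent}) = 0$. Since $qd \geq 1$ for these $d$ and $q$, such a $p_1 \in [0,1)$ exists. Next I would pick $p_2 \in [0.4950, 0.4975]$; because the chosen $q$ lies in $[0.9999998,1)$, the pair $(p_2,q)$ sits inside the rectangle of \thref{thm:main}, so $\P(\FM(d,q,p_2) \text{ is recurrent}) > 0$. Finally I would take $p_3$ on the high end where \thref{prop:p}(a) forces transience: since $q < 1$ we have $2 - q^{-1} < 1$, so any $p_3 \in [2 - q^{-1}, 1]$ gives $\P(\FM(d,q,p_3) \text{ is recurrent}) = 0$.

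It remains to check the ordering $p_1 < p_2 < p_3$ so that the recurrence probability goes $0$, then strictly positive, then $0$ as $p$ increases, which is incompatible with monotonicity. For the upper bound I note that $2 - q^{-1} \leq 1$ and, since $q \geq 0.9999998$, the threshold $2 - q^{-1}$ is very close to $1$ and in particular exceeds $0.4975 \geq p_2$, so $p_3 > p_2$ can be arranged. For the lower bound, $1/(1+qd) \leq 1/(1+q) < 1/2$ is comfortably below $0.4950 \leq p_2$ when $d \geq 2$, giving $p_1 < p_2$. Thus three points witnessing a non-monotone profile exist for every such $d$ and $q$, and the corollary follows.

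The only real care needed, and the place I would double-check the arithmetic, is confirming that the rectangle's $p$-interval $[0.4950,0.4975]$ genuinely lies strictly between the two transient regions for all admissible $q$ and all $d \geq 2$ simultaneously; this amounts to the chain of inequalities $1/(1+qd) < 0.4950$ and $0.4975 < 2 - q^{-1}$ holding uniformly, which I expect to be the main (though routine) obstacle. Both reduce to elementary estimates using $d \geq 2$ and $q \in [0.9999998,1)$, so no delicate probabilistic argument is required beyond invoking \thref{thm:main} and \thref{prop:p}.
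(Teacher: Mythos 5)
Your proposal is correct and is exactly the deduction the paper intends: combine the positive recurrence probability on the rectangle from \thref{thm:main} with the two transience regimes of \thref{prop:p} to exhibit a $0$, positive, $0$ profile at three increasing values of $p$, which rules out monotonicity. One small slip: the intermediate inequality $1/(1+qd) \le 1/(1+q) < 1/2$ is false as written (for $q<1$ one has $1/(1+q) > 1/2$); the bound you actually need follows from $d \ge 2$, namely $1/(1+qd) \le 1/(1+2q) < 0.34 < 0.4950$, which is clearly what you intended.
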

Note that \thref{cor:mono} is not meant to be evidence against the monotonicity conjecture from \cite{beckman2019frog}, which we still find plausible. Rather, our result is intended to illustrate that the phase structure when death is present is nuanced. See Figure~\ref{fig:pq} for the phase-diagram. 

\begin{figure}
\centering
\begin{tikzpicture}[xscale=4, yscale=4]
  \draw (0,0) rectangle (1,1);

  \fill[gray!25] (0,0) rectangle (1,0.5);

  \fill[gray!25, domain=0.5:1, variable=\q]
    plot ({1/(1+2*\q)}, \q)
    -- (0,1) -- (0,0.5) -- cycle;

  \fill[gray!25, domain=0.5:1, variable=\q]
    plot ({2 - 1/\q}, \q)
    -- (1,1) -- (1,0.5) -- cycle;

  \filldraw[blue!20, draw=blue, fill=blue, thick]
    (0.47,0.98) rectangle (0.4975,1);

  \foreach \x in {0,1}
    \draw (\x,0) -- (\x,-0.01) node[below, font=\tiny] {\x};

  \foreach \y in {0,1}
    \draw (0,\y) -- (-0.008,\y) node[left, font=\tiny] {\y};

  \node at (0.5,-0.05) {$p$};
  \node at (-0.05,0.5) {$q$};
\end{tikzpicture}

\caption{The $(p,q)$-phase diagram for $\FM(2,q,p)$ implied by our results. In the gray shaded regions the process is almost surely transient. The blue region represents the rectangle $[0.4950,0.4975] \times [0.9999998,1]$ (enlarged to make it visible at this scale) for which the process is recurrent  with positive probability. The white region is not covered by our results. 
} \label{fig:pq}
\end{figure}
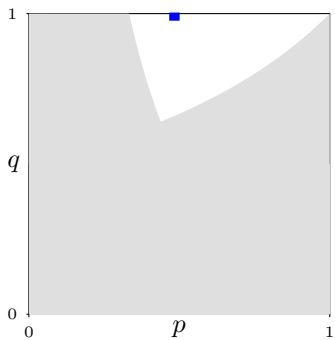

\subsection{Proof Discussion}

The proof of \thref{thm:main} goes by generalizing the method from \cite{beckman2019frog} to include death. Roughly speaking we show that a less recurrent subprocess, the \emph{recursive frog model}, is recurrent with positive probability for the parameter choices in \thref{thm:main}. This particular subprocess has only been used once before. The far more commonly used tool to prove recurrence on trees is a different subprocess known as the \emph{self-similar frog model}. We counted a dozen articles that rely on it. %

We first tried the self-similar frog model as a means to prove recurrence of $\FM(d,q,p)$. However, we were surprised to encounter an issue when death is incorporated. Namely, the Poisson bootstrapping method introduced in \cite{hoffman2016transience}, see \cite{bailey2023critical} for perhaps the most accessible description, appears to fail irreparably. The reason the recursive frog model is more robust to the presence of death is that the analysis only depends on first and second moments of a recursive formula rather than the whole distribution. This is the first instance we are aware of for which the recursive frog model prevails where the self-similar frog model does not, which is an interesting technical feature from our work. 

The proof of \thref{prop:p} shows that simpler models that dominate $\FM(d,q,p)$ are transient for the claimed parameter values. Specifically, we use a branching process and the process with all frogs initially awake. These are standard, elementary arguments that we include because \thref{prop:p} fills in much of the phase diagram and is needed to deduce \thref{cor:mono}.

\subsection{Organization}

In Section~\ref{sec:rfm} we introduce the recursive frog model and describe its properties.
In Section~\ref{sec:proofs} we prove \thref{thm:main}.
Section~\ref{sec:prop} has the proof of \thref{prop:p}.
The appendix contains a lemma regarding the length of random walk excursions.

\section{The recursive frog model}\label{sec:rfm}

We begin this section by defining and adapting the recursive frog model to include death and then state some of its properties.

\subsection{Definition}

Suppose that $0 < p < 1/2$ and let $\rho := p/(1-p)$. Denote the root of $\mathbb T_d$ by $\varnothing$. We will think of the root as the far left of the horizontally oriented tree and refer to frogs moving towards the root as moving \emph{left} and those moving away from the root as moving \emph{right}. The \emph{recursive frog model} $\RFM(d,q,p)$ starts with an active frog at $\varnothing$ and a sleeping frog at each $x \neq \varnothing$. We start with the case $q=1$ so that our definition aligns exactly with \cite{beckman2019frog}. Subsequently, we will explain how to modify for $q<1$. 

In $\RFM(d,1,p)$, the frog at the root follows a non-backtracking ray from the root to infinity sampled uniformly at random. This ray is obtained by including the last site at each distance $n=0,1,\hdots,$ from the root that the initially active frog visits. Since $p<1/2$, a unique such path exists almost surely. When a sleeping frog  is visited it becomes active. Activated frogs initially move left with probability $\rho$ and right with probability $1-\rho$ to a uniformly random child at each step. Frogs that move left to $\varnothing$ are killed (after counting their visit). Once an activated frog moves right it begins moving to uniformly sampled child vertices. Frogs are killed if they move right to an already visited site. 

Since the recursive frog model takes some getting used to, we clarify a bit more about how these paths are sampled and rules enforced. Suppose an activated frog begins at distance $n$ from $\varnothing$. The leftward path is sampled by taking the first vertex at distances $n-1$, $n-2$, ... from the root visited by the active frog's random walk. At some point the walk will either reach $\varnothing$ and be killed, or visit some leftmost site at positive distance from the root. At this point its rightward ray is sampled starting from that site independently, but in the same manner as the frog's started at $\varnothing$. During its rightward descent, if it ever visits a site that it or some other activated frog has visited it dies. Note that ties, where two or more rightward moving frogs arrive at simultaneously to the same site, are broken by choosing one of the frogs uniformly at random to survive and killing the others.

It is proven in \cite[Lemma 2.1]{beckman2019frog} that if $\RFM(d,1,p)$ is recurrent with positive probability then so is $\FM(d,1,p)$. The basic idea is that the paths frogs follow are trimmed and truncated versions of their full simple random walk paths. Hence, $\RFM(d,1,p)$ is less likely to be recurrent. Taking $q<1$ brings an added complication; now trimming the range is not necessarily a monotone operation since steps where the frog may have died are omitted. 

In \thref{lem:biased-walk-bounds} we prove that the total excursion lengths between first and final visits to different distances by a $p$-biased walk on $\mathbb Z$ (and thus $\mathbb T_d$ when only tracking displacement from the root) is stochastically dominated by independent copies of
$$T \sim 2 \Geo(s) + 1 \text{ with } s := (1-2p) \exp\left(-\frac{3(1 - 2p)^2}{2(1 + 4p)} \right).$$
Accordingly, we define $\RFM(d,q,p)$ for $q<1$ to consist of the same paths in $\RFM(d,1,p)$, but with each active frog surviving to take a jump at each step independently with probability
\begin{align}
\q := \E[q^T]= \f{sq}{1-(1-s)q^2}. \label{eq:q}
\end{align}
This gives an upper bound on the probability that the active frog died during some step in its full random walk path that is not counted by its trimmed sub-path used in the recursive frog model.

\subsection{Properties}

Using the stochastic domination from \thref{lem:biased-walk-bounds} and then following the proof of \cite[Lemma 2.1]{beckman2019frog} we conclude that if $\RFM(d,q,p)$ is recurrent with positive probability then so is $\FM(d,q,p)$. Moreover, \cite[Proposition 1.3]{beckman2019frog} observes that root visits by $\RFM(d,1,p)$ are stochastically increasing in $d$. The argument is very similar when death is present. Hence it is enough to prove recurrence for $\RFM(2,q,p)$. We record this conclusion as a lemma.

\begin{lemma} \thlabel{lem:suf}
    $\P(\RFM(2,q,p) \text{ is recurrent}) \leq \P(\FM(d,q,p) \text{ is recurrent})$ for  $d \geq 2$. 
\end{lemma}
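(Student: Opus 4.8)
The plan is to establish \thref{lem:suf} in two independent steps, mirroring the structure suggested by the surrounding discussion: first a \emph{domination in the death/drift parameters} reducing $\FM$ to $\RFM$, and second a \emph{monotonicity in $d$} reducing to $d=2$. For the first step I would make precise the coupling alluded to after \thref{lem:biased-walk-bounds}. The key observation is that every frog in $\RFM(d,q,p)$ follows a trimmed, truncated sub-path of the corresponding full simple random walk path it would follow in $\FM(d,q,p)$: leftward excursions are collapsed to their net displacement, and rightward moves to previously visited sites are deleted. I would argue that along this trimmed path, the probability a frog survives each of its recorded steps in $\RFM(d,q,p)$, namely $\q = \E[q^T]$ from \eqref{eq:q}, is an upper bound on the survival probability of the full walk restricted to the corresponding steps in $\FM(d,q,p)$. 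Concretely, \thref{lem:biased-walk-bounds} shows the omitted excursion lengths are stochastically dominated by independent copies of $T$, so the chance the frog survived all the hidden steps of an excursion is at least $\E[q^T] = \q$ after the stochastic-domination comparison; hence replacing the true survival with the constant $\q$ per recorded step produces a process at least as likely to kill each frog. A monotone coupling—killing a frog in $\RFM$ whenever its $\FM$ counterpart dies, and additionally thinning by $\q$—then yields that the active-frog set in $\RFM(d,q,p)$ is contained in that of $\FM(d,q,p)$, so every root visit in $\RFM$ corresponds to one in $\FM$, giving $\P(\RFM(d,q,p)\text{ rec.}) \le \P(\FM(d,q,p)\text{ rec.})$.

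For the second step I would invoke the monotonicity-in-$d$ argument of \cite[Proposition 1.3]{beckman2019frog}, adapting it to the presence of death. The idea there is to couple $\RFM(d,q,p)$ and $\RFM(d+1,q,p)$ so that the $d$-ary tree embeds into the $(d+1)$-ary tree by ignoring one child-direction at each vertex; since each activated frog chooses its rightward child uniformly among the available children, the extra branch in the larger tree only adds sleeping frogs and additional chances to wake others, while the leftward dynamics and the per-step survival probability $\q$ (which depends only on $p$ and $q$, not on $d$) are unchanged. One checks that under this embedding every root visit in the $\RFM(d,q,p)$ process is reproduced in the $\RFM(d+1,q,p)$ process, so root visits are stochastically increasing in $d$. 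Because $\q$ is defined through $T$ and hence does not depend on $d$, the death-thinning is identical in both processes and the coupling goes through verbatim from the $q=1$ case.

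Combining the two steps gives $\P(\RFM(2,q,p)\text{ rec.}) \le \P(\RFM(d,q,p)\text{ rec.}) \le \P(\FM(d,q,p)\text{ rec.})$ for all $d \ge 2$, which is the claim.

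I expect the main obstacle to be the first step, specifically making the trimming comparison rigorous when $q<1$. As the excerpt explicitly warns, ``trimming the range is not necessarily a monotone operation since steps where the frog may have died are omitted'': deleting a leftward excursion removes genuine opportunities for the frog to have died, so naively one cannot assert that $\RFM$ is less recurrent. The resolution, which I would have to carry out carefully, is that \thref{lem:biased-walk-bounds} controls the omitted steps by the stochastic dominance $T$, and the \emph{definition} of $\q$ as $\E[q^T]$ is precisely engineered so that charging each recorded step the factor $\q$ over-counts the death probability of the hidden excursion. The delicate point is verifying that these per-excursion survival factors can be assembled into a single monotone coupling of the two full processes, accounting for the dependence between a frog's trajectory and which sites are already visited; this is where the bulk of the technical work lies, and it is why the authors route the argument through the explicit stochastic-domination bound of \thref{lem:biased-walk-bounds} rather than a direct path-by-path comparison.
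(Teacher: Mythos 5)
Your proposal is correct and follows essentially the same route as the paper: the authors likewise combine the stochastic domination from \thref{lem:biased-walk-bounds} (with $\q = \E[q^T]$ serving as a lower bound on the survival probability of each omitted excursion, so that the $\q$-thinning in $\RFM$ kills frogs at least as often as death does in $\FM$) with the argument of \cite[Lemma 2.1]{beckman2019frog}, and then invoke the monotonicity in $d$ from \cite[Proposition 1.3]{beckman2019frog} adapted to the presence of death. The paper states these two steps only in outline, whereas you flesh out the coupling details, but the decomposition and the key ingredients are identical.
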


Suppose that $t \geq 2$. The recursive frog model earns its name from a recursive equation \cite[Equation 3.3]{beckman2019frog} satisfied by the total number of root visits $V_t = V_t(2,q,p)$ by the recursive frog model on $\mathbb T_2$ with sleeping frogs only to distance $t$ from the root and empty sites beyond.  The equation is still valid after a small modification to account for death. We describe it now.

If the frog at the root lives to take its first step, it will move to a child vertex that we denote by $\varnothing'$. 
If the frog started at the root survives to jump from $\varnothing'$, call the child vertex it jumps to $x$ and the other child vertex $y$. See Figure~\ref{fig:rfm}.

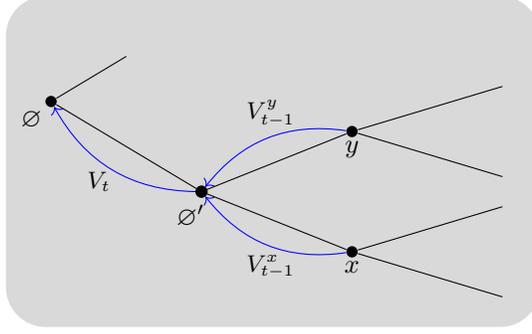
\begin{figure}
    \begin{center}
\begin{tikzpicture}[scale = 2,tv/.style={circle,fill,inner sep=0,
    minimum size=0.15cm,draw}, starttv/.style={circle,fill,inner sep=0,
    minimum size=0.15cm},walklines/.style={very thick, blue,bend left=15}]
\fill[black!15!white,rounded corners=15pt] (-0.3, -1.5) rectangle (3.25, .7);
    \path (0,0) node[starttv] (R) {}
        (1,-0.6) node[tv] (L1) {}
        (1, 0.6)  node (L2) {}
        (2, -0.2) node[circle, fill, inner sep = 0, minimum size = .15cm] (L12) {}
        (2,-1) node[circle, fill, inner sep = 0, minimum size = .15cm] (L11) {}
        (L12)+(1,-0.3) coordinate (L121)
             +(1, 0.3) coordinate (L122)
        (L11)+(1,-0.3) coordinate (L111)
             +(1, 0.3) coordinate (L112);
  \draw
       (R)--(L1);
 \path[every node/.style={font=\sffamily\small}]
       (L1) edge [->,bend left,blue] node[below left = -.1 cm] {\;\;$\textcolor{black}{V_{t}}$} (R)
       (L11) edge [->,bend left,blue] node[below] {\;$\textcolor{black}{V_{t-1}^x}$} (L1)
       (L12) edge [->,bend right,blue] node[above = .03 cm] {\;$\textcolor{black}{V_{t-1}^y}$} (L1);
\node[below left] at (R){$\varnothing$};
\node[below] at (L12){$y$};
\node[below] at (L11){$x$};
\node[below =.05 cm] at (L1){$\varnothing'$\;\;\;};
    \draw (L1)--(L11);
    \draw (R) -- (.5,.3);
       \draw (L1) -- (L12);
  \draw
       (L12)--(L122) (L12)--(L121);

  \draw     (L11)--(L112) (L11)--(L111);

\end{tikzpicture}
    \end{center}
    \caption{The recursive structure of $\RFM(2,q,p)$. Here, $V_{t}$ is the total number of visits to $\varnothing$ with sleeping frogs placed up to distance $t$ from the root. It can be expressed as a binomial thinning of the number of visits to $\varnothing'$ from each of its children $x$ and $y$. Conditional on $x$ and $y$ being visited, these quantities are i.i.d.\ and distributed like $V_{t-1}$.
    }
    \label{fig:rfm}
  \end{figure}

Call the event that $\varnothing'$ is visited by the active frog started at the root $A^{\varnothing'}$. Call the events that $x$ is ever visited by an active frog and that $y$ is ever visited by any active frog $A_t^x$ and $A_t^y$, respectively. Let $V_{t-1}(x)$ and $V_{t-1}(y)$ be independent and identically distributed copies of $V_{t-1}$. Let $X_t \sim \Bin(V_{t-1}(x), \rho \q)$ and $Y_t \sim \Bin(V_{t-1}(y), \rho \q)$,  and $Z \sim \Ber(\rho \q)$. The analogue of \cite[Equation 3.3]{beckman2018asymptotic} is
\begin{align} \label{eq:rec}
V_t = \mathbf 1 \{A^x_{t}\} X_t+\mathbf{1}\{A^y_t\cap A^x_t\}Y_t+\mathbf{1}\{ A^{\varnothing'}\}Z.
\end{align}
Here $Z$ detects whether or not the frog at $z$ is sent back to $\varnothing$. The random variables $X_t$ and $Y_t$ are the number of frogs sent to $\varnothing'$ from the subtrees rooted at $x$ and $y$, respectively, that then move to $\varnothing$. The indicators tell us that these sites are activated.

\begin{lemma} \thlabel{lem:EV}
$\E[V_t] = \rho \q^3(1+\P(A_t^y \mid A_t^x))\E[V_{t-1}]+\rho \q^2$ for all $t \geq 1$.
\end{lemma}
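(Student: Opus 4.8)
The plan is to apply $\E[\cdot]$ to the recursion \eqref{eq:rec} and evaluate the three resulting summands
$$\E[V_t] = \E\big[\mathbf 1\{A^x_t\}X_t\big] + \E\big[\mathbf 1\{A^y_t\cap A^x_t\}Y_t\big] + \E\big[\mathbf 1\{A^{\varnothing'}\}Z\big]$$
separately. For each I would factor the expectation as a product of the probability of the indicator event and the mean of the accompanying count variable. This factorization is the crux of the argument, and it relies on an independence that is built into the recursive construction.

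To justify the factorization I would track which randomness each object is measurable with respect to. The indicators $A^{\varnothing'}$ and $A^x_t$ depend only on the survival of the root frog as it follows its ray $\varnothing\to\varnothing'\to x$, while $A^y_t$ depends on the root frog together with the frog woken at $\varnothing'$ and the subtree rooted at $x$ — namely on whichever frog first steps right from $\varnothing'$ to $y$ — but crucially \emph{not} on the subtree rooted at $y$. On the other hand, $X_t$ is a function of the $x$-subtree alone (given $x$ is reached), $Y_t$ of the $y$-subtree alone (given $y$ is reached), and $Z$ of the single move made by the frog woken at $\varnothing'$. Because the two subtrees evolve from independent randomness once activated, as recorded in the caption of Figure~\ref{fig:rfm}, this yields $\mathbf 1\{A^x_t\}\perp X_t$, $\mathbf 1\{A^y_t\cap A^x_t\}\perp Y_t$, and $\mathbf 1\{A^{\varnothing'}\}\perp Z$, so each summand factors. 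The one delicate point is the middle term: I must verify that $A^y_t$ records only the \emph{arrival} at $y$ and is therefore independent of the $y$-subtree count $Y_t$.

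The rest is bookkeeping. The visitation probabilities are $\P(A^{\varnothing'})=\q$ and $\P(A^x_t)=\q^2$, since the root frog must survive one, respectively two, jumps along its ray, and I would leave $\P(A^y_t\cap A^x_t)=\q^2\,\P(A^y_t\mid A^x_t)$ as is. For the means, $V_{t-1}(x)$ and $V_{t-1}(y)$ are distributed as $V_{t-1}$ and the thinning probability is $\rho\q$, so conditioning on these counts gives $\E[X_t]=\E[Y_t]=\rho\q\,\E[V_{t-1}]$, while $\E[Z]=\rho\q$. Substituting,
$$\E[V_t] = \q^2\cdot\rho\q\,\E[V_{t-1}] + \q^2\,\P(A^y_t\mid A^x_t)\cdot\rho\q\,\E[V_{t-1}] + \q\cdot\rho\q,$$
which collects to the claimed $\rho\q^3\big(1+\P(A^y_t\mid A^x_t)\big)\E[V_{t-1}]+\rho\q^2$. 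Since \eqref{eq:rec} was stated for $t\ge 2$, I would dispose of $t=1$ directly: with sleeping frogs only out to distance $1$ we have $V_0=0$, only the $Z$-term contributes, and $\E[V_1]=\rho\q^2$ matches the formula. I expect the independence bookkeeping, rather than the arithmetic, to be the only real obstacle.
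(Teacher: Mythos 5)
Your proposal is correct and follows essentially the same route as the paper: take expectations in \eqref{eq:rec}, factor each summand using the independence of the indicator events from the subtree counts, and substitute $\P(A^{\varnothing'})=\q$, $\P(A^x_t)=\q^2$, $\E[X_t]=\E[Y_t]=\rho\q\,\E[V_{t-1}]$, $\E[Z]=\rho\q$. The extra care you take with the measurability of $A^y_t$ versus $Y_t$ and with the $t=1$ base case is sound but not something the paper dwells on.
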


\begin{proof}
Taking the expectation of both sides of \eqref{eq:rec} and computing some simple probabilities gives
\begin{align}
\E[V_t] &= \P(A^x_t) \E[X_t] +\P(A^y_t\cap A^x_t) \E[Y_t ] + \P( A^{\varnothing'}) \E[Z]\\
&= (\P(A^x_t) +\P(A^y_t\cap A^x_t))\rho \q \E[V_{t-1}] + \rho \q^2\\
&= (\q^2 +\P(A^y_t\mid A^x_t) \q^2)\rho \q \E[V_{t-1}] + \rho \q^2.
\end{align}
Factoring out the $\q^2$ from the first term gives the claimed equality.
\end{proof}

Looking ahead to an application of the Paley-Zygmund inequality, we need a bound on a moment ratio.

\begin{lemma}\thlabel{lem:EV2/EV}
If $\rho \q^3(1+\P(A_{t}^y \mid A_{t}^x))>1$ for all large $t$, then there exists $C=C(q,p)>0$ such that  $$\limsup_{t\to \infty} \E[V_t^2]/\E[V_t]^2 \leq C < \infty.$$
\end{lemma}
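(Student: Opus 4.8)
The plan is to derive a recursion for the second moment $\E[V_t^2]$ directly from the defining equation \eqref{eq:rec}, and then analyze the resulting ratio $R_t := \E[V_t^2]/\E[V_t]^2$ to show it stays bounded. First I would square the right-hand side of \eqref{eq:rec}. The three summands are supported on nested events — note $\ind{A_t^y \cap A_t^x} \leq \ind{A_t^x}$ and the $Z$-term is governed by $A^{\varnothing'}$ — so when I expand the square I will get diagonal terms $X_t^2$, $Y_t^2$, $Z^2$, and cross terms. The key structural input is that, conditionally on the activation events $A_t^x, A_t^y, A^{\varnothing'}$, the three contributions $X_t, Y_t, Z$ are independent (they come from the two disjoint subtrees rooted at $x$ and $y$, plus the root-frog's return), and $X_t, Y_t$ are i.i.d. binomial thinnings of independent copies $V_{t-1}(x), V_{t-1}(y)$ of $V_{t-1}$. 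Taking conditional expectations I would compute, using the binomial conditional moment identity $\E[\Bin(N,\alpha)^2 \mid N] = \alpha^2 N^2 + \alpha(1-\alpha)N$ with $\alpha = \rho\q$, that $\E[X_t^2 \mid A_t^x] = (\rho\q)^2 \E[V_{t-1}^2] + \rho\q(1-\rho\q)\E[V_{t-1}]$, and similarly for $Y_t$.

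Carrying this through and taking the unconditional expectation, I expect a recursion of the schematic form
\begin{align}
\E[V_t^2] = a_t\, \E[V_{t-1}^2] + b_t\, \E[V_{t-1}] + c_t\, \E[V_{t-1}]^2 + d_t, \label{eq:sketch-v2}
\end{align}
where $a_t = (\rho\q)^2\big(\P(A_t^x) + \P(A_t^x \cap A_t^y)\big)$ is exactly the coefficient governing the linear-in-$\E[V_{t-1}^2]$ growth, $b_t$ collects the binomial-variance corrections, and $c_t, d_t$ collect the cross terms (the $X_tY_t$ and $Z$-interaction pieces) which are quadratic in $\E[V_{t-1}]$ or constant. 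The crucial observation is that $a_t = \rho\q \cdot \big(\rho\q(\P(A_t^x)+\P(A_t^x\cap A_t^y))\big) = \rho\q \cdot \lambda_t$, where $\lambda_t := \rho\q^3(1 + \P(A_t^y \mid A_t^x))$ is precisely the multiplicative factor appearing in \thref{lem:EV}, i.e. $\E[V_t] = \lambda_t \E[V_{t-1}] + \rho\q^2$. Because the hypothesis guarantees $\lambda_t > 1$ for all large $t$, the factor $a_t = \rho\q\,\lambda_t$ satisfies $a_t < \lambda_t^2$ for large $t$ (since $\rho\q < 1$ when $p<1/2$, as $\rho = p/(1-p) < 1$ and $\q \le 1$), and this strict gap $a_t / \lambda_t^2 = \rho\q/\lambda_t < 1$ is what makes the ratio contract rather than blow up.

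The main step is then to convert \eqref{eq:sketch-v2} into a bound on $R_t$. Dividing through by $\E[V_t]^2 = (\lambda_t\E[V_{t-1}] + \rho\q^2)^2$ and using $\E[V_t] \geq \lambda_t \E[V_{t-1}]$, each error term $b_t\E[V_{t-1}], c_t\E[V_{t-1}]^2, d_t$ divided by $\E[V_t]^2$ is $O(1)$ uniformly in $t$, because $\E[V_{t-1}]$ is bounded below and grows at most geometrically while $\E[V_t]^2$ grows at the squared rate. Meanwhile the leading term contributes $\big(a_t/\E[V_t]^2\big)\E[V_{t-1}^2] \leq (a_t/\lambda_t^2)\, R_{t-1} = (\rho\q/\lambda_t) R_{t-1}$. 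Thus I obtain an inequality of the form $R_t \leq \gamma_t R_{t-1} + K$ with $\gamma_t = \rho\q/\lambda_t$ bounded away from $1$ for large $t$ and $K=K(q,p)<\infty$ a uniform constant. Iterating this contraction yields $\limsup_t R_t \leq K/(1-\limsup_t\gamma_t) =: C < \infty$, which is the claim. The trickiest bookkeeping will be verifying that the error terms genuinely remain $O(1)$ after normalization — in particular controlling the cross term $2\,\P(A_t^x \cap A_t^y)\E[X_t]\E[Y_t] = 2(\rho\q)^2\P(A_t^x\cap A_t^y)\E[V_{t-1}]^2$, which is quadratic in $\E[V_{t-1}]$ and must be shown to divide cleanly into $\E[V_t]^2$; but since its coefficient is comparable to $\lambda_t^2$ this division is bounded, and I would fold it into the constant $K$.
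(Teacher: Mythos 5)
Your proposal is correct and follows essentially the same route as the paper: square the recursion \eqref{eq:rec}, take expectations using the conditional independence of $X_t$, $Y_t$, $Z$, divide by the squared first-moment recursion from \thref{lem:EV}, and iterate the resulting contraction for the ratio $v_t = \E[V_t^2]/\E[V_t]^2$. Your contraction factor $\rho\q/\lambda_t$ is exactly the paper's coefficient $1/(\q^2(1+\P(A_t^y\mid A_t^x)))$, and your bookkeeping of the $O(\E[V_{t-1}])$ and quadratic cross terms matches the paper's treatment.
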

\begin{proof}
    First we obtain a first order approximation for $\E[V_t^2]$. Squaring both sides of \eqref{eq:rec} gives
\begin{align}
V_{t}^2
&= \mathbf 1\{A^x_{t}\} X_{t}^2
   + \mathbf 1\{A^y_{t}\cap A^x_{t}\} Y_{t}^2
   + \mathbf 1\{A^{\varnothing'}\} Z^2 \\[4pt]
&\qquad\qquad + 2\,\mathbf 1\{A^x_{t}\}\mathbf 1\{A^y_{t}\cap A^x_{t}\}\,X_{t}Y_{t} \\[4pt]
&\qquad\qquad\qquad + 2\,\mathbf 1\{A^x_{t}\}\mathbf 1\{A^{\varnothing'}\}\,X_{t}Z
   + 2\,\mathbf 1\{A^y_{t}\cap A^x_{t}\}\mathbf 1\{A^{\varnothing'}\}\,Y_{t}Z.
\end{align}
Taking expectations, using the fact that $X_{t}$ and $Y_{t}$ are distributed as independent $\Bin(V_{t-1},\rho \q)$ random variables, and following the derivations preceding \cite[Equation (3.8)]{beckman2019frog} gives
\begin{align}
\E[V_{t}^2] &= (\P(A^y_{t}\cap A^x_{t})+\P(A^x_{t})) (\rho \q)^2 \E[V_{t-1}^2]  \\
    &\hspace{4 cm}+  2\P(A^y_{t}\cap A^x_{t})(\rho \q)^2 \E[V_{t-1}]^2+ O(\E[V_{t-1}]) \\
&= \rho^2 \q^4(1+\P(A^y_{t}\mid A^x_{t}))  \E[V_{t-1}^2] +  2\rho^2 \q^4\P(A^y_{t}\mid A^x_{t})\E[V_{t-1}]^2 + O(\E[V_{t-1}]). \label{eq:EV2}
\end{align}

Squaring the equation in \thref{lem:EV} gives 
\begin{align}\E[V_{t}]^2 = \rho^2 \q^6[1+\P(A_{t}^y \mid A_{t}^x)]^2\E[V_{t-1}]^2+O(\E[V_{t-1}]).\label{eq:EV}
\end{align} 
Let $v_{t} = \E[V_{t}^2]/\E[V_{t}]^2$. Referring to \thref{lem:EV}, our hypothesis that $\rho \q^3(1+\P(A_{t}^y \mid A_{t}^x))>1$ ensures that $\E[V_t] \to \infty$. Dividing \eqref{eq:EV2} by \eqref{eq:EV} and simplifying gives
\begin{align}
    v_{t} = \f{ 1} {\q^2(1 + \P(A_{t}^y \mid A_{t}^x))}  v_{t-1} + O(1) 
\end{align}
Our hypothesis $\rho \q^3(1+\P(A_{t}^y \mid A_{t}^x))>1$ ensures that the coefficient of $v_{t-1}$ is less than 1 for large $t$ and thus $\limsup v_t < C < \infty$ for some constant $C$.
\end{proof}

We now have a simple sufficient condition for recurrence with positive probability. 

\begin{lemma} \thlabel{cor:V>0}
    If $\P(A_t^y \mid A_t^x) \ge (\rho \q^3)^{-1} - 1$ for some $t \geq 1$, then $$\P(\textstyle \lim_{t \to \infty} V_t(d,q,p) = \infty) \geq (4C)^{-1}$$ for all $d \geq 2$ with $C$ as in \thref{lem:EV2/EV}. 
\end{lemma}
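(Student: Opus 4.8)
The plan is to combine the moment estimates of \thref{lem:EV} and \thref{lem:EV2/EV} with the Paley--Zygmund inequality and then pass to the limit in $t$. First I would rewrite the hypothesis $\P(A_t^y\mid A_t^x) \ge (\rho\q^3)^{-1}-1$ as $\rho\q^3(1+\P(A_t^y\mid A_t^x)) \ge 1$ and promote it from the single level where it is assumed to all larger levels. For this I would show that $s \mapsto \P(A_s^y\mid A_s^x)$ is non-decreasing, by coupling the depth-$s$ and depth-$(s+1)$ versions of $\RFM(2,q,p)$ on common path samples and common survival coins so that the extra layer of sleeping frogs can only create further activations. Granting this, $\rho\q^3(1+\P(A_s^y\mid A_s^x)) \ge 1$ for every $s\ge t$.

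The point that closes the gap between the non-strict hypothesis and the strict condition in \thref{lem:EV2/EV} is that $\rho = p/(1-p) < 1$ and $\q \le 1$ force $\rho\q < 1$. Thus $1 + \P(A_s^y\mid A_s^x) \ge (\rho\q^3)^{-1}$ yields $\q^2(1+\P(A_s^y\mid A_s^x)) \ge (\rho\q)^{-1} > 1$, which is exactly the statement that the coefficient $\frac{1}{\q^2(1+\P(A_s^y\mid A_s^x))}$ in the recursion for $v_s := \E[V_s^2]/\E[V_s]^2$ stays bounded strictly below $1$. Running the argument of \thref{lem:EV2/EV} then gives $\limsup_{s\to\infty} v_s \le C$. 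The same bound applied in \thref{lem:EV} gives $\E[V_s] \ge \E[V_{s-1}] + \rho\q^2$, so $\E[V_s]\to\infty$.

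With $\E[V_s]\to\infty$ and $\limsup_s v_s \le C$ in hand, I would invoke the Paley--Zygmund inequality at each large level,
$$\P\left(V_s \ge \tfrac12\,\E[V_s]\right) \ge \frac14\cdot\frac{\E[V_s]^2}{\E[V_s^2]} = \frac{1}{4v_s}.$$
To convert these level-$s$ bounds into a statement about $\lim_t V_t$, I would use a monotone coupling $V_s \nearrow V_\infty$ of the truncated processes. Fixing $M$, since $\E[V_s]\to\infty$ we have $\tfrac12\E[V_s]\ge M$ for all large $s$, whence $\P(V_\infty\ge M) \ge \P(V_s \ge \tfrac12\E[V_s]) \ge \frac{1}{4v_s}$; taking $\limsup_s$ and using $\liminf_s v_s \le \limsup_s v_s \le C$ gives $\P(V_\infty\ge M)\ge (4C)^{-1}$, and letting $M\to\infty$ with continuity of measure from above yields $\P(V_\infty=\infty)\ge(4C)^{-1}$ for $d=2$. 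Finally, the stochastic monotonicity of root visits in $d$ noted in the discussion preceding \thref{lem:suf} lets me couple so that $V_t(d,q,p)$ dominates $V_t(2,q,p)$ for every $t$, extending the bound to all $d\ge 2$.

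The main obstacle I anticipate is justifying the two monotonicity facts rather than the analytic steps, which are routine. The passage $V_s\nearrow V_\infty$ and the monotonicity of $\P(A_s^y\mid A_s^x)$ are genuinely delicate because the recursive frog model kills rightward-moving frogs upon revisiting an occupied site and, with death present, omits steps where a frog may have perished---exactly the features the paper flags as breaking naive monotonicity. I would address this by coupling only the leftward, root-directed excursions, which are what generate the root visits counted by $V_t$, and arguing that the revisit-killing of rightward frogs cannot destroy a leftward visit that has already been produced. Once monotonicity is secured, the Paley--Zygmund estimate and the interchange of $\limsup_s$ with $M\to\infty$ are standard.
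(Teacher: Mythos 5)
Your proposal follows essentially the same route as the paper: rewrite the hypothesis as $\rho\q^3(1+\P(A_t^y\mid A_t^x))\ge 1$, feed it into \thref{lem:EV} and \thref{lem:EV2/EV} to get $\E[V_t]\to\infty$ together with a bounded second-moment ratio, apply Paley--Zygmund, and extend to all $d\ge 2$ by monotonicity in $d$. You are in fact more explicit than the paper about promoting the hypothesis from a single $t$ to all larger levels and about the monotone coupling $V_s\nearrow V_\infty$ needed to convert level-wise Paley--Zygmund bounds into a statement about $\lim_t V_t$, but these are refinements of the same argument rather than a different one.
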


\begin{proof}
An immediate corollary of \thref{lem:EV} is that $\E[V_t]$ diverges to infinity so long as $\rho \q^3[1+\P(A_{t}^y\mid A_{t}^x)] \geq 1$ for some $t \geq 1$. Note that this also implies that $\rho \q^2[1+\P(A_{t}^y\mid A_{t}^x)] > 1$ as required by  \thref{lem:EV2/EV}. 

Some algebra shows that $\rho \q^3[1+\P(A_{t}^y\mid A_{t}^x)] \geq 1$ is equivalent to the inequality in the corollary's statement.  Using \thref{lem:EV2/EV} and the Paley-Zygmund inequality we deduce that 
\begin{align}
\P(\textstyle \lim V_t(2,q,p) = \infty) &\geq \liminf \P( V_t \geq \E[V_t]/2) \\
&\geq \liminf (4 \E[V_t^2]/\E[V_t]^2 )^{-1} \\
&\geq (4C)^{-1} .
\end{align}
Applying  \thref{lem:suf} gives our claim for all $d\geq 2$. 
\end{proof}

\section{Proof of \thref{thm:main}} \label{sec:proofs}

\thref{cor:V>0} gives a sufficient condition for a positive probability of recurrence. Specifically,  we must show that 
\begin{align}
\P(A_t^y \mid A_t^x) \ge \frac{1}{\rho \q^3} - 1 \label{eq:suf}
\end{align}
for some $t \geq 1$ and choice of $q$ and $p$.  
Recall that $A_t^y$ is the event that the vertex $y$ is eventually visited by an active frog in $\RFM(2,q,p)$. These events are monotonically increasing in $t$ in terms of set containment. 

We use the same scheme as in \cite{beckman2019frog}, but with death incorporated. Since we are conditioning on $A_t^x$, we know that the frog started at the root reached site $x$. It will continue jumping right from $\varnothing'$ to $x$, activating $J_t\sim \Geo(1-\q) \wedge (t-2)$  sleeping frogs before either dying or moving to a section of a tree with no frogs. The sleeping frog at distance $k\geq 0$ from $\varnothing'$ on this ray has probability at least $(\rho\q)^{k} \q(1-\rho)/2$ of jumping directly along the geodesic from its starting vertex to $y$ without dying. Thus, we obtain a lower bound
\begin{align}
\P(A_t^y \mid A_t^x)& \geq 1-  \E \prod_{k=0}^{J_t} \left( 1- \left((\rho \q)^{k} \q \tfrac{1- \rho}{2}  \right) \right)\\
    &\geq  1-  \left(\prod_{k=0}^{t-2}\left( 1- \left((\rho \q)^{k} \tfrac{1- \rho}{2} \q \right) \right) \P(J_t \geq t-2) \right)\\
      &\geq   1-  \left(\prod_{k=0}^{t-2}\left( 1- \left((\rho \q)^{k} \tfrac{1- \rho}{2} \q \right) \right) \q^{t-2} \right). \label{eq:P}
\end{align}

Now, we take concrete values in order to provide an explicit rectangle. Set $t=52$ and suppose that $\rho$ is restricted to the interval $[0.98,0.99]$ and $\q$ is restricted to $[0.9999,1]$. Using the worst case endpoint values and numerical calculations we have
\begin{align}
\P(A_{52}^y \mid A_{52}^x)& \geq 1-  \left(\prod_{k=0}^{50}\left( 1- \left((0.98\times 0.9999)^{k}(.005)(0.9999) \right) \right) 1^{50} \right) \\
&\geq 0.1485.
\end{align}
Since $$\f1 { \rho \q^3} -1 \leq \f 1 { 0.99 \times 0.9999^3} -1 \leq 0.0104,$$ 
we conclude that \eqref{eq:suf} is satisfied for $\rho \in [.98,.99]$ and $\q \in [0.9999,1].$ The definition $\rho = p / (1-p)$  and some simple algebra shows that $p \in [0.4950, 0.4975]$. A tedious calculus exercise shows that the formula for $\q$ at \eqref{eq:q} is decreasing on the interval $[0.4950,0.4975]$, hence we may set $p=0.4975$ and solve for $q$ that makes $\q > 0.9999$ for this particular $p$-value. One can verify numerically, that $q \geq 0.9999998$ suffices, giving our claimed rectangle.

\section{Proof of \thref{prop:p}} \label{sec:prop}

\subsection*{Proof of (a)}
    Suppose that $q<1$. The number of frogs at each time step in $\FM(d,q,p)$ is dominated by a branching process that starts with one particle. At  discrete steps, each living particle will independently die with probability $1-q$, live with probability $q p$, or split into two particles with probability $q(1-p)$. The expected number of particles produced by a single particle after one step is thus $qp + 2q (1-p)$. This is less than or equal to $1$ if either $q \leq 1/2$ or if $p \geq 2- q^{-1}$. Hence, the frog model almost surely dies out for these parameter values.

\subsection*{Proof of (b)}
    Suppose that $p < 1/2$. The number of root visits in $\FM(d,q,p)$ is dominated by the number of root visits in the process that begins with all frogs awake. Letting $\rho = p /(1-p)$, the probability a $p$-biased random walk started at distance $n$ from the root ever reaches the root is $\rho^n$. Visiting the root for a frog at that distance requires living for at least $n$ steps. Hence the probability a frog at distance $n$ reaches the root is no more than $(q \rho)^n$. Using self-similarity for the $d^n$ frogs at distance $n$, the expected number of frogs to visit the root is bounded by $\sum_{n=0}^\infty (q \rho)^n d^n = \f{1}{1- q \rho d}$. This is finite so long as $q\rho d <1$, which is satisfied for the claimed $p < 1 / (1 + qd)$.

 \appendix 
 \section{Random walk excursion lengths}
The following lemma uses ideas from \cite[Section A]{hoffman2019infection}, where the law of simple random walk excursions for self-similar frog model paths were exactly characterized. Our case is slightly different due to bias and that steps in the recursive frog model allow some backtracking (the uniformly sampled ray going away from the root might overlap with a frog's previous steps taken towards the root). We also do not need an exact description since we only require a stochastic upper bound on the excursion lengths. 

\begin{lemma}\thlabel{lem:biased-walk-bounds}
Let $(X_t)_{t\ge0}$ be a nearest-neighbor random walk on $\mathbb Z$ with $X_0=n> 0$ that moves left with probability $0<p<1/2$ and right with probability $1-p$ at each step. Let $\rho = p/(1-p)$. We let $\Geo(r)$ denote the geometric distribution supported on the nonnnegative integers. 
\begin{enumerate}[label=\textup{(\alph*)}]
\item
For $\tau_{n-1}=\inf\{t\ge1: X_t=n-1\}$, it holds that
$$\f{\tau_n-1}{2} \preceq \Geo\left(r\right) \quad 
\text{ with } \quad r = \exp\left(-\frac{3(1 - 2p)^2}{2(1 + 4p)} \right).$$

\item Let $T_n = \max \{t \colon X_t =n\}$ be the time of the last visit to $n$. It holds that 
    $$\frac{T_n}{2}\ \preceq \Geo(s) \quad \text{ with } \quad s=(1-2p)r.$$
\item
Suppose the walk is conditioned to never go left of $n$ i.e.\ conditioned on the event $A=\{X_t \geq n \text{ for all $t \geq 0$}\}$. 
Let $T_n'$ denote the time of its last return to $n$. Then $T_n' \preceq T_n$.

\end{enumerate}
It follows that the length of each excursion between steps taken by the recursive frog model on $\mathbb T_d$ is stochastically dominated by the $2\Geo((1-p)(1-\rho)r) +1$ distribution. Moreover, these excursion lengths are independent between non-backtracking steps by the strong Markov property of random walk.

\end{lemma}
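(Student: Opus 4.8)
\textbf{The plan} is to establish parts (a), (b), and (c) as the three analytic cores and then assemble the ``it follows'' conclusion from them. I take up each part in turn, starting with (a), since (b) will build on it and (c) is a comparison argument.

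\emph{Part (a).} The quantity $\tau_{n-1}$ is the first time a $p$-biased walk started at $n$ reaches $n-1$, i.e.\ the first-passage time one step to the left. By translation invariance this has the same law regardless of $n$, so write $\tau = \tau_{n-1}$ and let $f(z) = \E[z^\tau]$ be its probability generating function. First-step analysis gives the standard relation: with probability $p$ the walk steps left immediately ($\tau = 1$), and with probability $1-p$ it steps right, after which it must make \emph{two} independent left-passages to recover a unit of leftward displacement. This yields the quadratic functional equation $f(z) = pz + (1-p)z f(z)^2$, whose relevant root is $f(z) = \frac{1 - \sqrt{1 - 4p(1-p)z^2}}{2(1-p)z}$. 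The claim $\frac{\tau - 1}{2} \preceq \Geo(r)$ with $r = \exp\!\big(-\tfrac{3(1-2p)^2}{2(1+4p)}\big)$ is equivalent to a pointwise bound on $f$: a $\Geo(r)$ variable $G$ satisfies $\E[z^G] = \frac{1-r}{1 - rz}$, and $\frac{\tau-1}{2}\preceq G$ amounts to $\E[z^{(\tau-1)/2}] \le \E[z^G]$ for $z \in [0,1)$, i.e.\ to $z^{-1/2} f(z^{1/2}) \le \frac{1-r}{1-rz}$. The real work is to verify this inequality for the stated explicit $r$; I expect this to reduce, after substituting the closed form of $f$ and clearing the square root, to a polynomial inequality in $z$ and $p$ that one checks holds on $[0,1)\times(0,1/2)$. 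The exponential form of $r$ strongly suggests the intended route is not an exact PGF comparison but a Chernoff/exponential-moment bound: one bounds $\P(\tau \ge 2k+1)$ by optimizing $\E[e^{\lambda \tau}]$, and the constant $\tfrac{3(1-2p)^2}{2(1+4p)}$ emerges from a second-order Taylor expansion of the log-moment-generating function near the relevant $\lambda$. I would pursue the exponential-moment route, as it matches the shape of $r$.

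\emph{Parts (b) and (c).} For (b), decompose the time $T_n$ of the last visit to $n$. The walk makes some number $N$ of returns to $n$ before leaving forever; since the walk is transient to the right, $N$ is geometric, and between consecutive returns the walk performs an excursion whose duration, when it goes left and comes back, is controlled by passage times of the type analyzed in (a). The factor $s = (1-2p)r$ relative to $r$ reflects the probability $(1-2p)$ governing whether a given excursion is a genuine left-excursion (the walk's left-bias deficit); concatenating a geometric number of such excursions and using the domination from (a) on each, together with the independence from the strong Markov property, yields $\frac{T_n}{2} \preceq \Geo(s)$. The cleanest execution is again at the level of generating functions: express $\E[z^{T_n}]$ as a geometric sum over excursions and compare termwise. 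For (c), conditioning on $A = \{X_t \ge n\ \forall t\}$ removes exactly the leftward excursions below $n$, so the conditioned last-return time $T_n'$ counts a \emph{subset} of the returns counted by $T_n$; a coupling that runs the conditioned and unconditioned walks together and discards the forbidden excursions gives $T_n' \preceq T_n$ directly, with no new computation.

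\emph{The concluding domination.} With (a)--(c) in hand, I assemble the final claim. A single step of the recursive frog model corresponds to a random-walk excursion between successive recorded distances. Such an excursion is either a leftward passage (a step toward the root) or a rightward step to a child; conditioning on which type, its length is dominated by $2\,T_n$-type or $2\,\tau$-type quantities from the lemma. The stated dominating law $2\Geo\big((1-p)(1-\rho)r\big) + 1$ arises by identifying the governing success probability: $(1-p)(1-\rho)r = (1-p)r - pr = (1-2p)r = s$ after using $\rho = p/(1-p)$, so this is precisely the $2\Geo(s)+1$ law named in the main text around \eqref{eq:q}. The $+1$ accounts for the final recorded step, and the factor $2$ for the parity of excursion lengths on $\mathbb Z$. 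Independence between distinct non-backtracking steps is immediate from the strong Markov property applied at the successive first-passage times, which is the last sentence of the statement. \textbf{The main obstacle} is Part (a): pinning down the explicit constant in $r$ and proving the exponential-moment inequality rigorously for all $p \in (0,1/2)$, rather than merely for the narrow parameter window used in \thref{thm:main}; the rest is bookkeeping with generating functions and a coupling.
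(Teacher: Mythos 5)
Your parts (b) and (c), and the final assembly of the $2\Geo((1-p)(1-\rho)r)+1$ bound, track the paper's proof closely: the paper likewise counts a geometric number of returns (leaving forever with probability $(1-p)(1-\rho)=1-2p$ after each visit), dominates each return excursion by the passage times from (a), and proves (c) by the observation that $T_n$ counts at least as many excursions as $T_n'$. But there is a genuine gap in part (a), which you yourself flag as ``the main obstacle'' and never close, and both of the routes you sketch for it are defective as stated. First, since $p<1/2$ the walk drifts \emph{right}, so $\P(\tau_{n-1}=\infty)=1-\rho>0$: the unconditioned passage time cannot be dominated by any geometric, and $\E[e^{\lambda\tau}]=\infty$ for every $\lambda>0$, so the proposed optimization of the exponential moment cannot even begin. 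The paper handles this by conditioning on $\{\tau_{n-1}<\infty\}$, noting that this conditioning is a Doob $h$-transform that reverses the drift; part (a) is really a statement about the conditioned passage time, and your proposal never mentions this conditioning.

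Second, your claimed equivalence between stochastic domination and a pointwise PGF inequality is false: $X\preceq Y$ implies an ordering of generating functions on $(0,1)$ (in the direction opposite to the one you wrote), but PGF ordering does not imply stochastic ordering. What \emph{is} equivalent to $\f{\tau-1}{2}\preceq\Geo(r)$ is the prefactor-free tail bound $\P\bigl(\tau\ge 2m+1\mid \tau<\infty\bigr)\le r^m$ for every $m\ge0$; a generic Chernoff bound produces $Ce^{-cm}$ with $C>1$, which breaks the domination at small $m$. The paper sidesteps both problems with a counting argument: under the reversed drift, the event $\{\tau\ge 2m+1\}$ forces at least $m$ steps away from the target (each of probability $p$) among the first $2m+1$ steps, so $\P(\tau\ge 2m+1\mid\tau<\infty)\le\P(\Bin(2m+1,p)\ge m)$, and then a prefactor-free binomial concentration inequality (\cite[Proposition C.1]{hoffman2019infection}) gives exactly $r^m$ with $r=\exp\left(-\frac{3(1-2p)^2}{2(1+4p)}\right)$. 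That is where the constant comes from --- not a second-order Taylor expansion of a log-moment-generating function --- and without the conditioning step and a prefactor-free tail bound your outline does not yield part (a), on which everything else in the lemma rests.
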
 

\begin{proof}
\textup{(a)}\;
The event $\{\tau_{n-1}<\infty\}$ occurs with probability $\mathbf P_n(\tau_{n-1}<\infty)=p/(1-p)$.
Conditioning on $\{\tau_{n-1}<\infty\}$ corresponds to a Doob $h$--transform 
which reverses the drift of the walk. Hence, $\tau_n$ is distributed as the hitting time of a $n-1$ after reversing the drift. For $m \geq 0$, notice that $\tau_n \geq 2m +1$ requires at least $m$ rightward steps and thus
$$\P(\tau_n \geq 2m +1 \mid \{\tau_n < \infty\}) \leq \P( \Bin(2m+1, p) \geq m  )\leq \exp\left( - \f{ (\alpha-1)2p}{\f 23 + \f {2}{\alpha -1}}m \right) $$
with $\alpha = (2p)^{-1}$. 
The last inequality uses a standard concentration inequality for the Binomial distribution (see \cite[Proposition C.1]{hoffman2019infection}). Simplifying the final expression we obtain
$$\P^*(\tau_n \geq 2m +1 ) \leq \exp\left(-\frac{3(1 - 2p)^2}{2(1 + 4p)}
m \right).$$
Thus, conditional on $\{\tau_n < \infty\}$, we have $$\f{\tau_n-1}{2} \preceq \Geo\left(\exp\left(-\frac{3(1 - 2p)^2}{2(1 + 4p)} \right)\right).$$

\smallskip
\textup{(b)}\;
Suppose the random walk is at $n$. If its next jump is to $n-1$, then it will almost surely return to $n$ according to a biased random walk. If its next jump is to $n+1$, then it will return to $n$ with probability $\rho = p/(1-p)$. As in (a), conditional on this event it performs a random walk with the bias $1-p$ towards $n$. Thus, the total number of returns to $n$ is geometric with parameter $(1-p)(1-\rho)$. Each excursion corresponding to a failure is distributed as the excursions in (a). Thus, $T_n$ is stochastically dominated by a sum of $\Geo((1-p)(1- \rho))$ many independent $\Geo(r)$ random variables. Since $(1-p)(1-\rho) = 1-2p$, this gives the claimed result.

\smallskip
\textup{(c)}\; This follows from the fact that the random walk will almost surely have a last visit to $n$. Thus, $T_n$ counts at least as many excursions as $T_n'$. 
\end{proof}

\section*{Acknowledgements}
Ahmed was partially supported by NSF DMS Grants 2238272 and 2349366. Junge was partially supported by NSF DMS Grant 2238272.  Part of this research was conducted during the 2025 Baruch College Discrete Mathematics NSF Site REU. 

\bibliographystyle{alpha}
\bibliography{frog}

\newcommand{\etalchar}[1]{$^{#1}$}
\begin{thebibliography}{BDD{\etalchar{+}}18}

\bibitem[AMP02a]{alves2002phase}
Oswaldo Alves, Fabio Machado, and Serguei Popov.
\newblock Phase transition for the frog model.
\newblock {\em Electronic Journal of Probability}, 7:1--21, 2002.

\bibitem[AMP02b]{alves2002shape}
Oswaldo~SM Alves, Fabio~P Machado, and S~Yu Popov.
\newblock The shape theorem for the frog model.
\newblock {\em The Annals of Applied Probability}, 12(2):533--546, 2002.

\bibitem[BDD{\etalchar{+}}18]{beckman2018asymptotic}
Erin Beckman, Emily Dinan, Rick Durrett, Ran Huo, and Matthew Junge.
\newblock Asymptotic behavior of the {B}rownian frog model.
\newblock {\em Electronic Journal of Probability}, 23:1--19, 2018.

\bibitem[BFJ{\etalchar{+}}19]{beckman2019frog}
Erin Beckman, Natalie Frank, Yufeng Jiang, Matthew Junge, and Si~Tang.
\newblock The frog model on trees with drift.
\newblock {\em Electronic Communications in Probability}, 24:1--10, 2019.

\bibitem[BJL23]{bailey2023critical}
Emma Bailey, Matthew Junge, and Jiaqi Liu.
\newblock Critical drift estimates for the frog model on trees.
\newblock {\em arXiv:2303.15517}, 2023.

\bibitem[FMS04]{fontes2004critical}
Luiz~Renato Fontes, F{\'a}bio~Prates Machado, and Anish Sarkar.
\newblock The critical probability for the frog model is not a monotonic function of the graph.
\newblock {\em Journal of applied probability}, 41(1):292--298, 2004.

\bibitem[GP23]{gallo2023critical}
Sandro Gallo and Caio Pena.
\newblock Critical parameter of the frog model on homogeneous trees with geometric lifetime.
\newblock {\em Journal of Statistical Physics}, 190(2):34, 2023.

\bibitem[GR18]{gallo2018frog}
Sandro Gallo and Pablo~M Rodriguez.
\newblock Frog models on trees through renewal theory.
\newblock {\em Journal of Applied Probability}, 55(3):887--899, 2018.

\bibitem[GTW22]{guo2022minimal}
Chengkun Guo, Si~Tang, and Ningxi Wei.
\newblock On the minimal drift for recurrence in the frog model on $d$-ary trees.
\newblock {\em The Annals of Applied Probability}, 32(4):3004--3026, 2022.

\bibitem[HJJ16]{hoffman2016transience}
Christopher Hoffman, Tobias Johnson, and Matthew Junge.
\newblock From transience to recurrence with {P}oisson tree frogs.
\newblock {\em The Annals of Applied Probability}, 26(3):1620--1635, 2016.

\bibitem[HJJ17]{hoffman2017recurrence}
Christopher Hoffman, Tobias Johnson, and Matthew Junge.
\newblock Recurrence and transience for the frog model on trees.
\newblock {\em The Annals of Probability}, 45(5), September 2017.

\bibitem[HJJ19]{hoffman2019infection}
Christopher Hoffman, Tobias Johnson, and Matthew Junge.
\newblock Infection spread for the frog model on trees.
\newblock {\em Electronic Journal of Probability}, 24:1--29, 2019.

\bibitem[LMP05]{lebensztayn2005improved}
{\'E}lcio Lebensztayn, F{\'a}bio~P Machado, and Serguei Popov.
\newblock An improved upper bound for the critical probability of the frog model on homogeneous trees.
\newblock {\em Journal of statistical physics}, 119:331--345, 2005.

\bibitem[LU19]{lebensztayn2019new}
Elcio Lebensztayn and Jaime Utria.
\newblock A new upper bound for the critical probability of the frog model on homogeneous trees.
\newblock {\em Journal of Statistical Physics}, 176(1):169--179, 2019.

\bibitem[MJ24]{mathews2024improved}
Poly Mathews~Jr.
\newblock Improved critical drift estimates for the frog model on trees.
\newblock {\em Electronic Communications in Probability}, 29:1--14, 2024.

\bibitem[Pop03]{popov2003frogs}
Serguei~Yu Popov.
\newblock Frogs and some other interacting random walks models.
\newblock {\em Discrete Mathematics \& Theoretical Computer Science}, (Proceedings), 2003.

\bibitem[RS04]{ramirez2004asymptotic}
Alejandro~F Ram{\'\i}rez and Vladas Sidoravicius.
\newblock Asymptotic behavior of a stochastic combustion growth process.
\newblock {\em Journal of the European Mathematical Society}, 6(3):293--334, 2004.

\end{thebibliography}

\end{document}